\numberwithin{equation}{section}
\definecolor{darkred}{rgb}{.70,.12,.20}
\definecolor{darkgreen}{rgb}{.20,.52,.14}
\definecolor{byz}{rgb}{.44,.16,.39}
\numberwithin{equation}{section}
\newtheorem{remark}{Remark}
\newtheorem{definition}{Definition}
\newtheorem{assumption}{Assumption}
\newtheorem{theorem}{Theorem}
\newtheorem{corollary }{Corollary}
\newtheorem{Lad-Ur}{Ladyzhenskaya-Uraltceva iterative Lemma}
\newtheorem{problem}{Main Problem}
\title[Peaceman Model for Well-Block For Non-Linear Flows Near Well]{Fundamentals in Peaceman Model for Well-Block radius For Non-Linear Flows Near Well }
\author{
 A. Ibraguimov$^1$, E. Zakirov.$^2$, I. Indrupskiy$^3$, D. Anikeev$^4$ 
}
\date{}
\begin{document}
\maketitle
\begin{center}
{
{$^1$ Department of Mathematics and Statistics, Texas Tech University, Oil and Gas Research Institute of Russian Academy of Sciences}
\\
\small{Lubbock, Texas, USA, Moscow,Russia e-mail: \texttt{akif.ibraguimov@ttu.edu}}
\smallskip
\\
{$^2$ Oil and Gas Research Institute of Russian Academy of Sciences,}
\\
{Moscow, Russia, e-mail: \texttt{ezakirov@ogri.ru}}
\\
{$^3$ Oil and Gas Research Institute of Russian Academy of Sciences,}
\\
{Moscow, Russia, e-mail: \texttt{i-ind@ipng.ru}}
\\
{$^4$ Oil and Gas Research Institute of Russian Academy of Sciences,}
\\
{Moscow, Russia, e-mail: \texttt{anikeev@ogri.ru}}
\\
}
\end{center}
\begin{abstract}
\noindent
 We consider sewing machinery between finite difference and analytical solutions defined at different scale: far away and near source of the perturbation of the flow.
One of the essences of the approach is that coarse problem and boundary value problem in the proxy of the source model two different flows. We are proposing method to glue solution via total fluxes, which is predefined on coarse grid.  It is important to mention that the coarse solution "does not see" boundary.

From industrial point of view our report provide mathematical tool for analytical interpretation of simulated data for fluid flow around a well in a porous medium. It can be considered as a mathematical "shirt" on famous Peaceman well-block radius formula for linear (Darcy) radial flow but can be applied in much more general scenario. 

As an important case, we consider nonlinear Forchheimer flow. In the article we rigorously obtained well-block radius, explicitly  depending on $\beta-$Forchheimer factor and total rate of the flow on the well, and provide generalization of the   Dake Formula and evaluation of the $D-$factor.

\end{abstract}
\tableofcontents
\section{Introduction}
Many industrial simulators of the processes of fluid flows  are based on the numerical solution of the partial differential equations (PDE) models (see for example \cite{eclipse},\cite{bogachev},\cite{CMG},\cite{naz1}). Due to large difference in scales, there is a need for analytical approximation to replace numerical solution near the source (boundary) controlling the processes.

Aim of the article is to revisit \textit{Peaceman well block radius},
which is routinely used by reservoir engineers to link value of the pressure for the grid block of numerical solution to the actual  value of the pressure on the well. In the paper we investigate this issue from mathematical point for linear Darcy flow and non-linear Forchheimer type of the flow.   

We reformulate Peaceman method to make analytical and mathematical aspects of the problem transparent.  
In this paragraph, we will highlight main topics of our interest in this project.
Consider baseline case of the 2-D steady regime of the flow initiated in the fully penetrated reservoir of the thickness $h=1$ by sole vertical well of radius $r_w$, with given production rate $q$. Reservoir pressure on the external boundary will be considered fixed. Let generated grid in the domain to be such that source (final) block $Q_0$ of size $\Delta$ contains the well. Consequently, RHS of the corresponding system of the algebraic equations is homogeneous (null) in all cells but $Q_0$.

Let numerical solution of the PDE to be $\tilde{p}$ defined at each block on the grid takes value $p_0$ at $Q_0$. By construction $\tilde{p}$ forms a finite dimensional matrix and is defined at discreet "points". It does not "see" the well of the small radius $r_w$. In the same time well is generating flows in the domain, and it is vital to use $p_0$ for finding $p_w$, which one can assign as pressure value on the well for further processing.  To do so Peaceman considered material balance (or finite-difference) equation in the five-spot system of the grid blocks: $\{1\},\{2\},\{3\},\{4\}$, and  $\{0\}$ (see Fig. \ref{circular grid}) \cite{zia5}:
 \begin{align}\label{Mat-Bal}
        \frac{k}{\mu } \cdot \frac{(p_1 - 2p_0 + p_3)}
        {(\Delta x)^2}) + \nonumber
    \\ 
         + \frac{k}{\mu}  \cdot \frac{(p_2 - 2p_0 + p_4)}{(\Delta y)^2} = \frac{q}{\Delta x\cdot\Delta y}, 
    \end{align}
    where $p_1=p(-\Delta x,0), \ p_3=p(\Delta x,0) \, \ p_2=p(0,\Delta y), \  p_4=p(0,-\Delta y), \ \text{and } p_0= p(0,0). $

Assuming symmetry constraint $p_1 = p_2 = p_3 = p_4$, and $\Delta_x=\Delta_y=\Delta$, Peaceman then reduced \eqref{Mat-Bal} to the equation  
\begin{equation}\label{Mat-Bal-0}
   p_1-p_0=q\frac{\alpha}{4}, \ \alpha=\frac{\mu}{k}. 
\end{equation}
To relate value $p_0$ to the value $p_w$ on the well Peaceman uses Darcy-Dupuit formula, which relates pressure drop $p_{\theta}-p_r$ between value of the pressure $p_\theta=p(\theta),$ on the imaginary well of radius $\theta$ and current pressure $p_r=p(r)$ at radius $\theta <r\leq \Delta$, to the total rate $q:$
\begin{equation}\label{dup-0}
 {-}\alpha^{-1}\frac{\partial p(r)}{\partial r}= v_r\leftrightarrow  p_r=p_\theta+q\alpha \cdot\frac{1}{2\pi}\ln\frac{r}{\theta}.   
\end{equation}

Main question is as follows: \textit{For given $\Delta$, does auxiliary $R_0$ exists such that $p_{R_0}-p_{\Delta}=p_0-p_1$ for any $q$?
}

In the above  $p_{R_0},\ p_\theta,  p_\Delta$ are obtained by Dupuit-Darcy equation of flow for $r=R_0$, $r=\theta$ and $r=\Delta$ respectively. 

Peaceman obtained the remarkable answer \cite{zia5}: \textit{Such $R_0$ exists and is defined via equation $R_0=\Delta \cdot e^{-\pi/2}.$} And this $R_0$ is called the Peaceman well block radius. 

Note that Peaceman's $R_0$ does not depends on $q$ and $\theta.$ 

\begin{figure}
        \centering
        \includegraphics[scale=0.4]{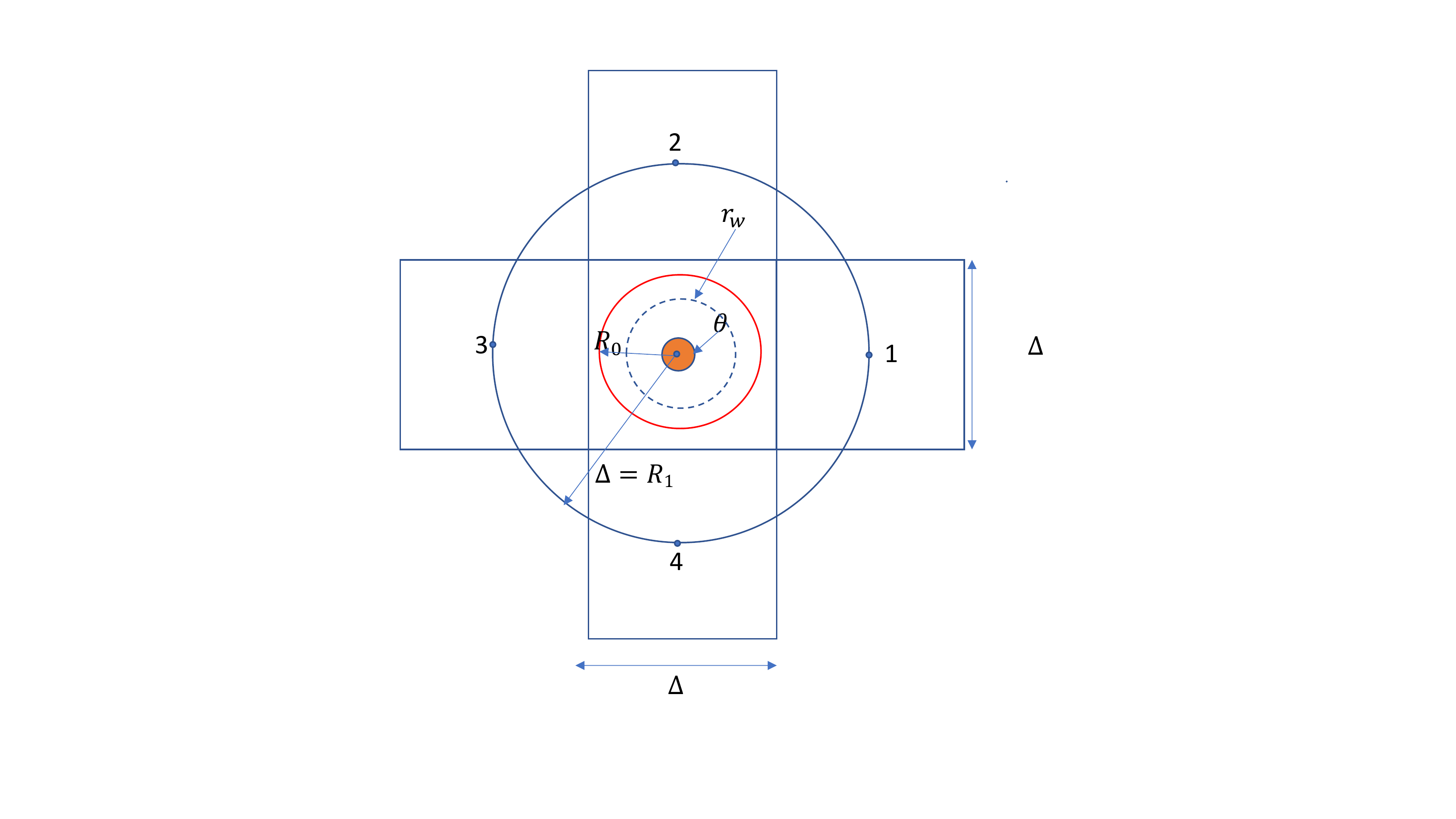}
        \caption{Five-spot grid of size $\Delta$ with the well at center, and auxiliary $U(0,R_w,R_1)$ and $U(0,R_w,R_0)$}
        \label{circular grid}
\end{figure}
    
Then pressure on the well can be reasonably approximated from any numerical solution (value $p_0$) as shown in Sec. \ref{Peace-Rev}.

This issue is discussed in detail in Sec. \ref{sweing-for}, in which we also introduce Peaceman well-posedness in order to make arguments more rigour, which can be used in different scenarios. 

Mathematically material balance equation \eqref{Mat-Bal-0} and  Dupuit-Darcy equation \eqref{dup-0} of flow  are split. In the article we further consider non-linear flows in porous media under hypothesis that
\textit{block-to-block (global) material balance can be assumed linear, and  Forchheimer non-linearity is included only in the equation of flow \eqref{dup-0} and has a form: $-\frac{\partial p(r)}{\partial r}=\alpha v_r+\beta v_r^2$ .} Considering the same arguments as for the linear Darcy flow, this results in a dependency of $R_0$ on $q$.  We implemented this algorithm in Sections \ref{Two-Term} and \ref{Dake}, and obtained formulae for $R_0$ which is very close to traditional one used in simulators if $\Delta$ is large, but quite different when $\Delta$ is small.

In our up-coming research projects we will extend this approach to different classes of pre-Darcy and post-Darcy equations, based on analytical formulae(see \cite{ibragim-prod-ind-gas}) for steady state and PSS solutions for generalised non-Darcy equation.  We will also investigate non-linear Forchheimer-type balance equation of the form $4(p_1-p_0)=\alpha q+f(q),$ for non-linear flow. 

Note that domain of the flow $U=\Omega\setminus B(0,r_w)$ does not contain singular point, and method of the domain discretization based on the material balance is such that $\Delta>r_w.$ This does not allow unbounded grid refinement, consequently theoretical comparison between analytical solution and numerical one is limited.
An interesting mathematical insight for Peaceman formula came from comparison of the numerical solution with Green function which we did in Sec.   \ref{Green-sec}. Namely, finite difference scheme applied for Laplace equation defined in whole domain with isolated source point generates numerical solution which is an approximation to the Green function in the "big" domain $\Omega$  with singularity at $0$ .
Then analogous to the Peaceman well posedness, the question for the analytical Green function $G(x,y)$ is stated as follows:
 \textit{
 Does auxiliary $R_0$ exists such that $\left(G(x,y)\Big|_{|X|=\Delta}-G(x,y)\Big|_{|X|=R_0}\right)\asymp 1+O(\Delta)?$ 
}
In Theorem \ref{Peaceman-for-Green} we proved that the answer is $R_0=\Delta \cdot e^{-\pi/2}.$ Which once more conforms Peaceman formula for $\Delta\asymp 0,$ as well.  

To investigate non-linear flows we propose splitting method in which balance equation is the same as for Darcy flow, but non-linearity impact on the relation between $p_0$ and $p_1$ and value of the pressure on the well is considered through Forchheimer law. This results in the dependence of the auxiliary  $R_0$ on $q$ (see Section \ref{Two-Term}). Obtained relation  justifies formula for $D-$factor , which is actively used by reservoir engineers (see Section \ref{Dake})      
We also formulate general algorithm for well pressure interpretation. It is worth mentioning that some arguments for numerical data interpretation are routinely used by reservoir engineers without mathematical justification. We provide an insight behind picture, which makes clear future engineering application.

Motivation of the research and brief review are presented in the two sections: \ref{well-repres} - Well representation in numerical flow simulations, \ref{trans-prob} - Review on the well transmissibility calculations. Section \ref{conclusion} gives the Concluding remarks. For convenience  we listed all notation in the Section \ref{nomen}.

\section{Nomenclature}\label{nomen}
The following notation is used
\begin{align*}
  &B(0,R)=\{|X|\leq R\}\\  \nonumber
  &X=(x,y)\\ \nonumber
  &v_r  \ - \  \text{radial flow (Darcy) velocity}\\ \nonumber
  &p(x,y)  \ - \   \text{pressure function}\\ \nonumber
  &\delta  \ - \  \text{delta function}\\ \nonumber
  &\Delta  \ - \  \text{grid size}\\  \nonumber
  &k  \ - \  \text{permeability of the porous medium (reservoir)}\\  \nonumber
  &h  \ - \  \text{reservoir thickness}\\  \nonumber
  &\mu  \ - \  \text{viscosity of the fluid}\\  \nonumber
  &R_w \ - \ \text{well radius}\\  \nonumber
  &\theta  \ - \ \text{imaginary well radius }\\  \nonumber
  &\tilde{p}\ - \ \text{numerical solution by discreet scheme of interest defined at discreet points } (i,j).\\  \nonumber
  & f(s)\asymp g(s) , \ \text{if} \ \lim_{s\to 0}\frac{f(s)}{g(s)}=constant >0\\ \nonumber
\end{align*}

   
 

\begin{section}{About well representation in a 3D flow model}\label{well-repres}

The problem of well representation in numerical flow modelling has long history and peculiar difficulties. Unfortunately, even today it is not possible to confirm that all the perplexities have been already overcome. Well modeling is a classical problem of representing objects of different scale within a single computational module with the requirement to achieve necessary accuracy of simulations. For example, in petroleum reservoir simulation grid block sizes in the horizontal plane are typically 2-3-4 orders of magnitude larger than the well diameter. A typical well radius is of the order of 10 cm, and common elementary numerical grid block dimensions are ~ $10^2$x$10^2$x1 m.

Progress in the availability of computing facilities has led to an increase in the size of the grid and, to a lesser extent, to a decrease in the size of individual grid cells. Over the years, the typical size of a grid cell has decreased insignificantly, while the model sizes (number of grid blocks) have increased by 2-3 orders of magnitude

At the same time, pressure gradients are maximal in the near-well zones, requiring their most accurate representation in these areas. The farther the grid cell is from the well, the less reservoir pressure change in it is associated with a particular change in well operation (sink/source intensity).

Wells are almost never explicitly simulated. This means that pressure in a grid block penetrated by a well differs significantly from the bottom-hole pressure in the same well. This pressure difference causes influx/outflux to/from the grid block. So the problem of well model arises. This model relates well flow rate to the pressure difference that forces this rate.
To the best of our knowledge, the problem of transition from the well grid block pressure to the bottom-hole pressure of an oil well was originally comprehended in the former USSR. Firstly, it was investigated in relation to 2D flow problems solution on electrical integrators with RC (resistance-capacitance) grids \cite{zia1}. Later similar studies were performed for 2D flow problems solution with finite-difference methods \cite{zia2,zia3}. Same approach to gas wells simulation was considered in \cite{zia4}.
In much more recent foreign publications, well connection transmissibility factor was introduced into the formula for well flow rate in a grid block penetrated by the well. The latter was multiplied by the difference between the grid block pressure and the well bottom hole pressure. Well connection transmissibility plays an important role in the flow modeling. It is usually designated as WI - abbreviation for “well index”. Or alternatively as CF - abbreviation for "connection transmissibility factor". The WI (CF) $T_w$ is defined as the ratio between the well volumetric flow rate (at reservoir conditions) $q_w$ and the difference between the grid block pressure $p_0$ and the well bottom hole pressure $p_w$ for single-phase flow of a unit-viscosity fluid:
\begin{equation}
T_w=\frac{q_w \mu}{p_0-p_w} 
\label{aq2.1}
\end{equation}
or
\begin{equation}
q_w=\frac{T_w}{\mu}(p_0-p_w)
\label{aq2.2}
\end{equation}
Formulas \eqref{aq2.1}-\eqref{aq2.2} express the essence of well modeling in numerical flow simulations. A well is usually not simulated explicitly with approximation of its trajectory by a grid mesh. Instead, it only acts as a point-wise source/sink term in flow (mass balance) equations with the flow rate calculated according to the formula \eqref{aq2.2}.
The form of equation \eqref{aq2.2} is very close to the general expression for the well productivity index PI:
\begin{equation}
q_w=\frac{PI}{\mu}(<p>-p_w)
\label{aq2.3}
\end{equation}
where $Q_w$ is the total well flow rate, and $<p>$  is the average reservoir pressure. Relative affinity of the two equations \eqref{aq2.2} and \eqref{aq2.3} explains why some methods developed for calculation  of PI could also be used for calculation of $T_w$.
In different studies, different parameters can be understood by PI, but in any case this parameter relates the inflow at the well and the reservoir-well pressure drop.

In general, the inflow to the well is approximated in different ways depending on the calculation grid. In this paper, we consider only the case of single-phase incompressible flow for a well penetrating a single grid block of a uniform Cartesian grid with finite-difference approximation. In this case, $q_w$ is defined as the total flow rate through the cylindrical borehole surface area with radius $r_w$. For a more general case, equation \eqref{aq2.2} becomes
\begin{equation}
q_{wi}=\sum_i{T_{wi}}M_i(H_i-H_{wi})
\label{aq2.4}
\end{equation}
Here sum is over grid blocks $i$ penetrated by the well. $T_{wi}$ is the CF, or transmissibility, of the grid block $i$ including contributions of the block and well geometries, and of the grid block flow properties (permeability). In many cases $T_{wi}$ is constant, however, it may vary over time in advanced well models. $M_i$ is the mobility in the grid block $i$ including contributions of the fluid properties and fluid-rock interaction. In the incompressible single-phase case:
\begin{equation}
M_i=\frac{1}{\mu},
\label{aq2.5}
\end{equation}
where $\mu$ is the fluid viscosity. In the more complex case, $M_i$ also incorporates characteristics of the two-phase or three-phase flow and depends on the grid block pressure, fluid saturations (volume fractions) and component concentrations.
$H_i-H_{wi}$ is the difference of total fluid potentials (including pressure and gravity) between the grid block $i$ and the well connection in it. In the general case, $H_{wi}$ is related to the well bottomhole pressure $p_w$ through a hydrostatic correction and friction losses, and it depends on the trajectory of the well.
\end{section}

\begin{section}{Brief review of the connection transmissibility computation problem}\label{trans-prob}
In the simplest case, for a vertical well in a homogeneous reservoir, an analytical formula for calculating $T_w$ was presented in the D.Peaceman’s paper \cite{zia5}. The Peaceman's approach is described in detail below. It deals with calculation of so-called equivalent radius $R_0$, so that the well-block pressure $p_0$ can be interpreted as the pressure at $R_0$ in the analytical solution for axisymmetric flow around the well. In the pre-Peaceman period, the approach of paper \cite{zia6} was widely used, with areally averaged pressure used instead of pressure at the equivalent radius. Inaccuracy of this approach was clearly shown in \cite{zia5}. Later, flow simulation software (flow simulators) included only Peaceman-type well models.

For the sake of justice, it was noted above that the problem of individual well consideration in flow simulations was first solved for vertical wells in the USSR, long before the appearance of similar papers in the United States. However, only the papers of D.Peaceman \cite{zia5,zia7} are generally referred to in this connection.

A number of authors have addressed the problem of well CF calculation. Paper \cite{zia7} generalizes results of \cite{zia5} for the case of a single fully penetrating vertical well located in a Cartesian grid with orthotropic-anisotropic medium. Modifications taking into account alternative well orientations were presented in papers \cite{zia8,zia9}. Although those approaches based on spatial variables scaling are still used in existing commercial simulators, in some cases they can lead to significant errors \cite{zia10}.
However, classical Peaceman model does not cope with complex cases, for example, off-centered wells located not in the center of a grid block \cite{zia11,zia12}, or generalized-form grid geometries, for example, deformed grid blocks or local grid refinement near the well \cite{zia11,zia13}.

Approaches combining semi-analytical solutions and finite difference calculations for regions exceeding a single grid block are much more accurate. In \cite{zia14}, productivity index of a horizontal well parallel to a coordinate axis, as previously derived in \cite{zia15}, was used as a control solution. The authors of \cite{zia14} combined this solution with analytical expression for pressure distribution in a cross-cut problem on a Cartesian grid with large aspect ratio (width to height) of grid cells. This made possible calculation of well productivity index by cross-sections of the three-dimensional problem.

There is some gap in representation of the solutions obtained within the classical Peaceman model when further applied to more complex cases. 

Based on the comparison of analytical and numerical results, accurate values of well connection factor in homogeneous reservoirs were determined in \cite{zia16}. For inhomogeneous case, reference solution was obtained by small-scale modeling on Voronoi grids.
In \cite{zia18}, a single inclined well was considered in a laterally infinite layered system. Semianalytical solution derived with the reflection method and analytical solution of the thin-layer theory was combined to the finite-difference pressure solution, with lateral boundary conditions for pressure determined from the analytical solution.
In \cite{zia10}, Laplace equation was solved in elliptic coordinates using geometric transformation taking into account well inclination in an infinite reservoir. This result was used together with numerical solution to determine appropriate well connection transmissibility.
Single-layer potential was applied in \cite{zia18} to calculate stationary pressure distribution in the vicinity of a well. Additionally, grid block transmissibilities were also matched to take into account specifics of radial flow.
Later in paper \cite{zia19} it was proposed to subdivide well inflow problem into two auxiliary subproblems, so that superposition of both subsolutions gives a solution of the original problem. The first subproblem represents a singular flow caused by well operation. And the second one considers regular flow not depending on the well. More precisely, the first problem consists in pressure determination in the case of given inflow rate in an infinite reservoir. The second problem corresponds to well operation with zero flow rate in a limited reservoir, and boundary conditions at the outer boundary are adjusted to the first problem solution at this boundary.
The main distinctive feature of this method lies in transformation of Cartesian coordinates into logarithmic-polar (radial) coordinates. Then it becomes possible to avoid singularity of the solution in the vicinity of a well. However, the second auxiliary problem becomes somewhat worse than on the original grid, since the approximation error increases. Approximation in the new coordinate system is carried out with standard methods of two-point or multi-point approximation, adjusted for logarithmic polarity of the coordinate system. To model a well, a Peaceman-type model was employed in \cite{zia19}. The value of the equivalent radius for the well block could be arbitrary. The only requirement was that the corresponding circle lied inside the grid block and was larger than the circle of the wellbore.

Exact solutions with grids taking into account well trajectories and near-well heterogeneity have also been discussed in the literature (confer finite element-based grids \cite{zia20} and locally elliptical hybrid grids \cite{zia21}).
Regarding wells of non-conventional types, i.e. with arbitrary trajectory or multilateral wells, it should be noted that modern technologies are successful in their construction. Therefore, they become more and more common. This type of wells can penetrate grid blocks in any direction. In addition, this type of well models often use irregular grids to numerically represent various geological features. These grids cover the range from structured curved meshes, multi level grids to completely unstructured grids. Combined application of wells with complex trajectory and progressive grids leads to significant complications in achieving acceptable accuracy for predicting future well behavior.

Effective approach to modeling productivity of unconventional well types during reservoir depletion consists in the use of semi-analytical approaches. Early studies considered single horizontal wells (of infinite conductivity), parallel to one of the sides of a rectangular reservoir. Solution methods employed sequential integral transformation \cite{zia22,zia23} and point Green functions \cite{zia24,zia25,zia26}, leading to solutions in the form of series.
More complex well geometries were considered later in \cite{zia27,zia28,zia29} using numerical integration of differential equations. Well hydraulics (i.e., limited well conductivity) was combined in \cite{zia30} with flow in the reservoir. All the mentioned semi-analytical methods have advantages of limited input datasets and high computational efficiency compared to finite difference modeling. This makes these methods well suited for preliminary assessment of primary production during reservoir depletion.

Early semi-analytical methods, however, were limited to homogeneous systems or almost strictly layered systems \cite{zia17,zia30}. This was a significant limitation, since productivity of unconventional well types can be significantly affected by small-scale heterogeneity in the near-well area. Small-scale heterogeneity could be included in a detailed computational model. But the resulting model could become very difficult to be built and require significant computational time to run.

In this paper we recall the original Peaceman problem to provide its accurate mathematical interpretation and treatment. The corresponding "sewing machinery" is general and can be extended to more complex cases. As an example, we consider the practically-important case of nonlinear Forchheimer flow around the well. Accurate results are obtained for the equivalent radius and CF (or inflow formula) and compared to the Dake's formula generally used in reservoir simulators.

To some extent, this study is based on similar principles with the mathematical treatment of PI calculations presented in \cite{ibragim-Prod-Ind} for linear Darcy flow case in the detail for all three regimes: pseudo-steady, boundary dominated, and steady state.  In case of non-linear Forchheimer flow, the corresponding framework makes sense only for two regimes: pseudo-steady state and steady state, and is presented in \cite{ibragim-prod-ind-PSS}. For gas, corresponding steady was presented in \cite{ibragim-prod-ind-gas}. Main issues here were that in spite of essentially time-dependent nature of the solution, the productivity index, as it was rigorously proven in cited paper, is  time-independent regardless of initial data, or tends to a steady state value exponentially. Therefore, PI can very naturally be used in numerical simulations of the flow to tune a numerical model to real life (observed) data of the reservoir.
\end{section}

\section{Peaceman Well Block Radius and Fundamentals in Finite Difference Solution}
\subsection{Origin of the Peaceman Formula Revisited}\label{Peace-Rev}
Let's in our own words reformulate main idea of the Peaceman paper \cite{zia5}. The aim is to prepare ground for proposed mathematical framework. All other reports from the Peaceman Well Block radius series of articles \cite{zia5}, \cite{zia7}, and articles which follow, although having high importance, are actually extensions and developments of the Peaceman paradigm which we will present in this section.  
Peaceman considered the problem of numerical simulation of flow towards a well, which is subject to Darcy equation. Total rate of production is fixed. In \cite{zia5} flow is assumed symmetrical w.r.t. the well located in the center block $0$ of a 5-point stencil of a numerical finite difference grid as shown in fig.\ref{Flow-Domain}.

The finite difference approximation is as in \eqref{Mat-Bal}. Due to symmetry it is reduced to equation \eqref{Mat-Bal-0}, or for arbitrary reservoir thickness 
 \begin{equation}\label{p_1-p_0-peac}
    p_1-p_0=\frac{\mu}{4kh}q, \ \text{ here} \ \alpha=\frac{\mu}{kh},
\end{equation}  
Consider the vertical well of radius $R_w$ situated in the center of the block $0$ and causing the flow in radial direction only. Assume that $q$ is the fixed total production rate. Then flow in the domain $U(0,R_1,R_w)=B(0,R_1)\setminus B(0,R_w)$ is uniquely characterised by the pressure distribution
 \begin{equation}\label{p_r-dup}
     p(r)=\frac{\mu q}{2 k h\pi}\ln \frac{r}{R_1}+p_1, \ \text{  here } p_1 \ \text{is the value of pressure at} \ r=R_1 .
 \end{equation}
Now let $R_1=\Delta$, so that $p_1$ is the same as in \eqref{p_1-p_0-peac}, and then
\begin{equation}\label{p_w-p_1-dup}
p(R_w)=  \frac{\mu q}{2 k h\pi}\ln \frac{R_w}{\Delta}+p_1.
\end{equation}
Peaceman paradigm in short can be stated as  
\begin{problem}
Find $R_0$ s.t. the value of $p_0$ in \eqref{p_1-p_0-peac} provides 
\begin{equation}\label{p_w-p_0-dup}
p(R_w)=  \frac{\alpha q}{2\pi}\ln \frac{R_w}{R_0}+p_0.
\end{equation}
\end{problem}
\begin{theorem}\label{peac-wel-blok-R} Assume that total rate of the production $q$ and step size of the grid $\Delta$ are given. Assume that the single fully penetrating well is located at a point chosen as the origin and so being the center of the numerical block $[-\frac{\Delta}{2},\frac{\Delta}{2}]^2$. Let $R_w$ is such that 
$\ln\frac{\Delta}{R_w}>\frac{\pi}{2}$.
Then necessary and sufficient condition that guarantees existence of the solution of the system \eqref{p_1-p_0-peac}-\eqref{p_w-p_0-dup} for $R_0$ is  $\ln\frac{\Delta}{R_0}=\frac{\pi}{2}.$
\end{theorem}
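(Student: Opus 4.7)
The system to be solved consists of three scalar equations \eqref{p_1-p_0-peac}, \eqref{p_w-p_1-dup}, \eqref{p_w-p_0-dup} involving the four pressure values $p_0$, $p_1$, $p(R_w)$, and the unknown length scale $R_0$. My strategy is purely algebraic: eliminate the pressures to reduce the system to a single equation for $R_0$, and then read off both the necessary and the sufficient direction from that equation.

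\textbf{Step 1: Eliminate the wellbore pressure.} I would set the right-hand sides of \eqref{p_w-p_1-dup} and \eqref{p_w-p_0-dup} equal, since both must equal $p(R_w)$. This yields
\begin{equation*}
p_1 + \frac{\alpha q}{2\pi}\ln\frac{R_w}{\Delta} \;=\; p_0 + \frac{\alpha q}{2\pi}\ln\frac{R_w}{R_0},
\end{equation*}
which simplifies (the $\ln R_w$ terms cancel) to
\begin{equation*}
p_1 - p_0 \;=\; \frac{\alpha q}{2\pi}\ln\frac{\Delta}{R_0}.
\end{equation*}

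\textbf{Step 2: Insert the material balance relation.} Substituting the finite-difference relation \eqref{p_1-p_0-peac}, namely $p_1 - p_0 = \alpha q /4$, produces
\begin{equation*}
\frac{\alpha q}{4} \;=\; \frac{\alpha q}{2\pi}\ln\frac{\Delta}{R_0}.
\end{equation*}
Because $q \neq 0$ (the well is actually producing) and $\alpha = \mu/(kh) > 0$, I may divide through, leaving the single scalar condition
\begin{equation*}
\ln\frac{\Delta}{R_0} \;=\; \frac{\pi}{2},
\end{equation*}
equivalently $R_0 = \Delta\,e^{-\pi/2}$. This is the necessary part: any $R_0$ making the three equations compatible must satisfy it.

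\textbf{Step 3: Sufficiency and the role of the hypothesis on $R_w$.} For the converse, I would take $R_0 = \Delta e^{-\pi/2}$, choose any $p_1$, define $p_0$ by \eqref{p_1-p_0-peac}, define $p(R_w)$ by \eqref{p_w-p_1-dup}, and verify \eqref{p_w-p_0-dup} by reversing the computation above. The only thing to check for this construction to make geometric sense is that the auxiliary annulus $U(0,R_0,R_w)$ on which the Dupuit profile \eqref{p_r-dup} is evaluated be non-empty, i.e.\ $R_w < R_0$. Taking logarithms, this is exactly $\ln(\Delta/R_w) > \ln(\Delta/R_0) = \pi/2$, which is precisely the standing hypothesis of the theorem. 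So the assumption on $R_w$ is not used to derive the value of $R_0$, but only to guarantee that the resulting equivalent well-block radius is physically admissible (larger than the actual well radius).

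\textbf{Remarks on difficulty.} The computation itself is elementary once the three equations are assembled; the content of the theorem is really conceptual, namely that the affine structure of \eqref{p_1-p_0-peac}--\eqref{p_w-p_0-dup} forces a unique $R_0$ independent of $q$, $p_w$, and $\mu/(kh)$. The only subtle point worth flagging in the write-up is that, because the condition must hold for the \emph{given} $q$, the cancellation of $\alpha q$ is legitimate; the miracle is that the resulting $R_0$ then automatically works for every other value of $q$ as well, which is what gives Peaceman's formula its practical utility.
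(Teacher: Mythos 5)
Your proof is correct and follows essentially the same route as the paper, whose proof consists of exactly the "direct equivalent algebraic operations" on \eqref{p_1-p_0-peac}, \eqref{p_w-p_1-dup}, and \eqref{p_w-p_0-dup} that you carry out explicitly (eliminate $p(R_w)$, substitute the material balance, cancel $\alpha q$). Your additional remark that the hypothesis $\ln\frac{\Delta}{R_w}>\frac{\pi}{2}$ is used only to guarantee $R_w<R_0$, i.e.\ that the annulus is nonempty, is a helpful clarification that the paper leaves implicit.
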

\begin{proof}
Proof follows from direct equivalent algebraic operations on the equations \eqref{p_1-p_0-peac},\eqref{p_w-p_0-dup}, and  \eqref{p_w-p_1-dup}.
\end{proof}
\begin{remark}
In this section interpretation of the Peaceman paper \cite{zia5} is made directly without significant modification. But it is already clear that the main idea is to sew numerical and analytical solutions formulated on different scales to provide required information for tuning procedure between the numerical solution and observed well pressure data. In what will follow in this paper, we will bring more mathematical insight in the Peaceman sewing machinery, and provide a general framework suitable for non-Darcy flows. 
\end{remark}
\subsection{Green function and related interpretation of the well-block radius }\label{Green-sec}
Mathematically speaking, Peaceman numerical scheme as presented in \cite{zia5} is a finite difference representation for the Green function approximated on the square  block with the source at center. In this subsection we state the corresponding problem in PDE format and provide its relation to the numerical solution. Then we will state a problem, provided that $G(x,0)$ is a Green function with source at the origin, of finding the value of $R_0$ as a function of $\Delta$ such that oscillation of the $G(x,0)$ in the annular domain $U(0,R_0,\Delta)=U\cap B(0,R_0,\Delta)  $ almost constant  for $\Delta \asymp 0$ and 
$\Delta \text{ such that} \ B(0,\Delta)\in U.$

Consider BVP in the bounded domain $U$ containing source point $0$ (see fig. \ref{Flow-Domain}) :
\begin{align}\label{bvp-Green}
  \frac{kh}{\mu } \cdot \Delta p = q^0\cdot \delta(x) \ \text{in the domain \ $U$}   \\
  p(x)=0 \ \text{on the boundary} \ \partial U \label{bound-cond}
\end{align}

   \vspace{-0.5cm}
    \begin{figure}[htp]
   \centering
        \includegraphics[scale=0.6]{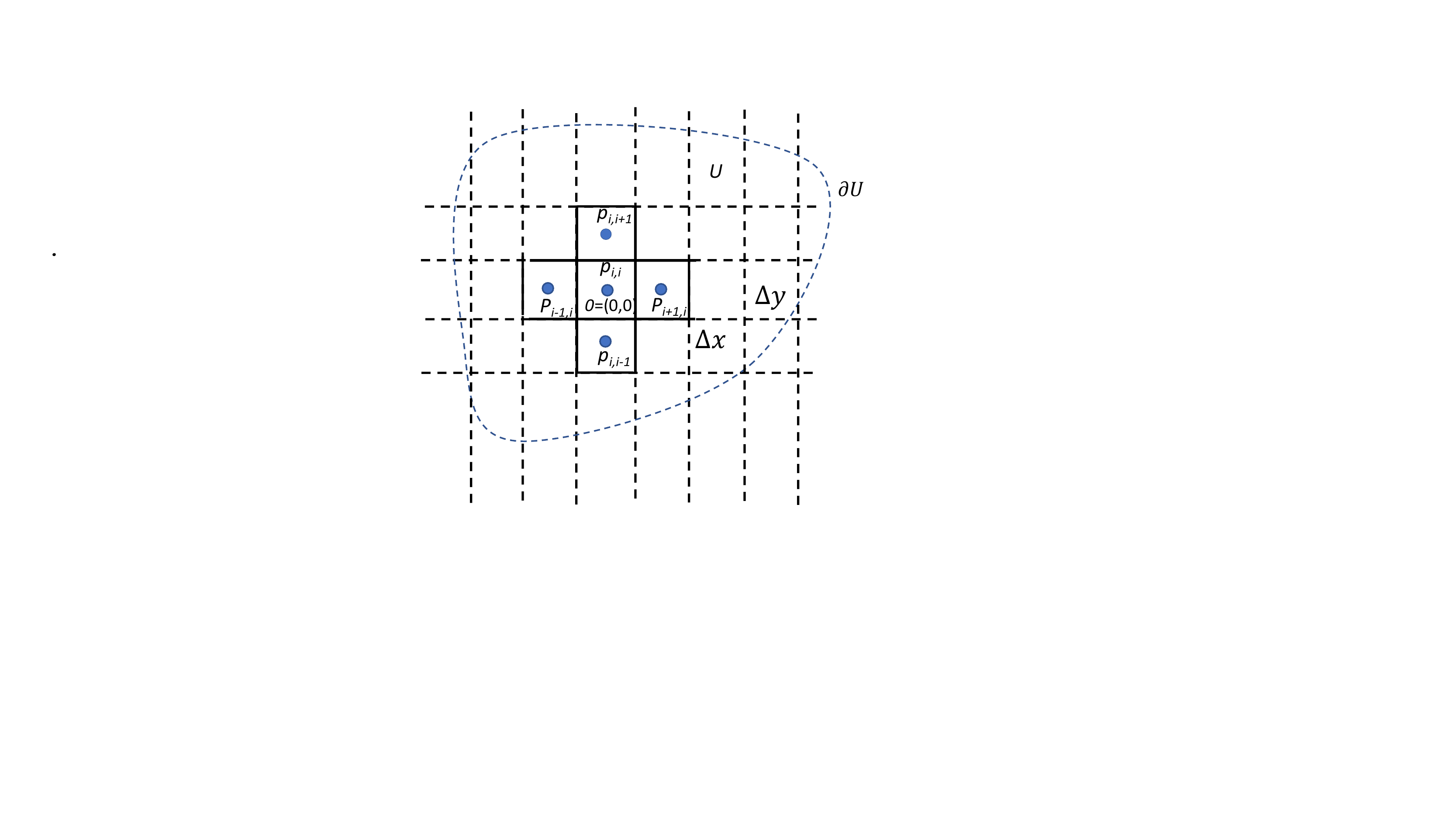}
        \vspace{-3cm}
        \caption{Five-point stencil for the finite difference scheme, and domain of the flow with source at $0$}
        \label{Flow-Domain}
    \end{figure}
    
We divide the whole area of flow into $M\times M$ blocks. For all blocks $0\leq i \leq M,$ $0 \leq j \leq M$ (see fig. \ref{Flow-Domain}), the finite difference equation for \eqref{bvp-Green} will take form:
   \begin{align}\label{a-0}
        \frac{kh}{\mu } \cdot \frac{(p_{i+1, j} - 2p_{i,j} + p_{i-1, j})}
        {(\Delta x)^2}) + \nonumber
    \\ 
         + \frac{kh}{\mu}  \cdot \frac{(p_{i, j+1} - 2p_{i,j} + p_{i, j-1})}{(\Delta y)^2} = \frac{q_{i, j}}{\Delta x\cdot\Delta y} 
    \end{align}
   
or in the form of Peaceman:
    \begin{align}\label{a-01}
        \frac{kh\Delta y}{\mu \Delta x} \cdot (p_{i+1, j} - 2p_{i,j} + p_{i-1, j}) + \nonumber
    \\ 
         + \frac{kh\Delta x}{\mu \Delta y} \cdot (p_{i, j+1} - 2p_{i,j} + p_{i, j-1}) = q_{i, j} 
    \end{align}
    
    In the above equation, $q_{i,j}=0$ if ${i\neq 0}$ or $j\neq 0$ ($q_{i,j}=q\cdot \delta_{i,j}$ - Kronecker symbol). Evidently, sizes of the block in $x$ and $y$ direction are correspondingly $\Delta x$ and $\Delta y$ and converge to $0$ as $M\to \infty$. Let us denote $2M\times2M$ matrix $P_M$
\begin{equation}
P_M=\Big(   p_{i,j} \Big)_{\left((-M\leq i\leq M);(-M\leq j \leq M)\right)}
\end{equation}
 Elements of the matrix represent values of the solution of the discreet Poisson  equation \eqref{bvp-Green} with sink/source localized at center $(0,0)$.
 
 Johansen Taousch from SMU proposed that using classical machinery(see \cite{Evans}) of splitting Green function $G(x,y)$ into the sum of fundamental solution and "corrector" one can prove the following.
 \begin{theorem}
Let $(x,y) \neq (0,0)$ is a fixed point of the domain $U$. This point belongs to one element (block) of the grid $U_M$ which approximates the domain $U.$ Let $p_M(x,y)$ is the solution of the system $2M\times 2M,$ extended to $C^2(U)\cap C^0(\Bar U)$. Then as $M\to \infty$, function $p_M(x,y) \to G(x),$  where \begin{equation}\label{green-function}
G(x)=\frac{1}{2 \pi} \ln \frac{1}{r}+g(x,y), \ r=\sqrt{x^2+y^2}.
\end{equation}
is the Green function.
Function $G$ has the property
\begin{equation}\label{total-rate-green}
\int_{\gamma}q^0\frac{\partial G}{\partial \nu} ds=q^0 ,\  \text{where } \gamma \text{ is a closed contour around the source at} \ 0.
\end{equation}
 \end{theorem}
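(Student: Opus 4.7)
The plan is to mirror the continuous decomposition \eqref{green-function} at the discrete level. Write $G = \Phi + g$ where $\Phi(X) = \frac{1}{2\pi}\ln\frac{1}{|X|}$ is the free-space fundamental solution of the Laplacian and $g$ is harmonic in $U$ with boundary values $g|_{\partial U} = -\Phi|_{\partial U}$. In parallel, split the discrete solution as $p_M = \Phi_M + g_M$, where $\Phi_M$ is the free-grid fundamental solution of the five-point Laplacian (i.e.\ the solution on all of $\mathbb{Z}^2$ scaled by $\Delta$ with Kronecker source $q^0 \delta_{i,j}$ at the origin), and $g_M$ is the discrete-harmonic function on $U_M$ whose values on $\partial U_M$ equal $-\Phi_M|_{\partial U_M}$. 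By uniqueness of the discrete BVP, $p_M = \Phi_M + g_M$ satisfies the scheme \eqref{a-0} with zero boundary values and the correct source.

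First I would invoke the classical asymptotics for the five-point lattice fundamental solution (McCrea--Whipple / Stöhr expansion): for $(i,j)\neq (0,0)$,
\begin{equation*}
\Phi_M(i\Delta, j\Delta) = \frac{1}{2\pi}\ln\frac{1}{|(i\Delta, j\Delta)|} + C_0 + O\!\left(\frac{\Delta^2}{i^2+j^2}\right),
\end{equation*}
where $C_0$ is a fixed lattice constant independent of $M$. Consequently $\Phi_M \to \Phi + C_0$ uniformly on any compact set of $U\setminus\{0\}$, and also uniformly along $\partial U_M$ since $\partial U$ is bounded away from the source. This is the principal technical input; I would cite it rather than redo the Fourier/contour computation.

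Next I would treat $g_M$ by the standard convergence theory of the five-point scheme for smooth elliptic problems. Its boundary data $-\Phi_M|_{\partial U_M}$ converge uniformly to $-\Phi|_{\partial U} - C_0$. Because $g_M$ is discrete-harmonic (the source is absorbed entirely in $\Phi_M$), the discrete maximum principle gives $\|g_M - \tilde g_M\|_\infty \le \|\,\text{boundary error}\,\|_\infty + O(\Delta^2)$, where $\tilde g_M$ is the grid restriction of the continuous harmonic corrector $g - C_0$; consistency of the five-point stencil for a $C^2$ harmonic function supplies the $O(\Delta^2)$ consistency error, and stability via the discrete maximum principle converts it into uniform convergence. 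Thus $g_M \to g - C_0$ uniformly on compact subsets of $\bar U$. Adding the two pieces at the fixed interior point $(x,y)\ne 0$ the constant $C_0$ cancels and $p_M(x,y) \to \Phi(x,y) + g(x,y) = G(x,y)$. Property \eqref{total-rate-green} then follows at once from the divergence theorem applied to the interior of $\gamma$: since $\frac{kh}{\mu}\Delta G = \delta$ on $U$, integration of $q^0 \nabla G\cdot\nu$ over $\gamma$ returns exactly $q^0$ (up to the physical prefactor absorbed in the normalization of \eqref{bvp-Green}).

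The hard part will be step one: obtaining the lattice Green-function asymptotics with an explicit bound on the remainder uniformly up to the boundary, and in particular identifying the additive constant $C_0$ so that it cancels cleanly against the corrector. A secondary subtlety is the definition of the $C^2$-extension of $p_M$ alluded to in the statement: one must use a sufficiently smooth interpolant (bilinear plus mollification, or a bicubic spline) whose pointwise value at the fixed query point $(x,y)$ inherits the nodal convergence rate without degrading it — this is routine but must be stated to justify evaluating $p_M(x,y)$ at an off-grid point.
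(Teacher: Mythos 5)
The paper does not actually prove this theorem: it states explicitly that the proof is deferred to an upcoming publication, offering only the hint of splitting the Green function into a fundamental solution plus a corrector. Your proposal carries out exactly that splitting, but at the discrete level, which is the natural way to make the hint rigorous; so there is no competing argument in the paper to compare against, and your overall architecture (lattice fundamental solution with McCrea--Whipple/St\"ohr asymptotics, plus a discrete-harmonic corrector controlled by consistency and the discrete maximum principle, plus the divergence theorem for \eqref{total-rate-green}) is sound.

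One claim is wrong as written and needs repair. In physical coordinates the additive constant in the lattice expansion is \emph{not} independent of $M$: from the lattice asymptotics $A(n)=\frac{1}{2\pi}\ln|n|+\kappa+O(|n|^{-2})$ and $|n|=|X|/\Delta$ one gets $\Phi_M(X)=\frac{1}{2\pi}\ln|X|+\kappa-\frac{1}{2\pi}\ln\Delta+\cdots$ (up to the sign and the prefactor $\mu q^0/(kh)$ dictated by \eqref{a-0}), so your $C_0$ contains the divergent term $\frac{1}{2\pi}\ln\frac{1}{\Delta}$. Consequently neither $\Phi_M\to\Phi+C_0$ nor $g_M\to g-C_0$ is a valid limit statement with a fixed constant; only the sum converges. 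The argument survives because the five-point scheme reproduces constants exactly and the maximum-principle bound for $g_M$ is insensitive to an additive constant in the boundary data, so the $M$-dependent constant cancels identically in $p_M=\Phi_M+g_M$; you should therefore compare $g_M$ with the grid restriction of $g-C_0(M)$ and state convergence only for the sum. Three smaller points: the remainder in physical variables is $O\bigl(\Delta^2/|X|^2\bigr)=O\bigl(1/(i^2+j^2)\bigr)$, not $O\bigl(\Delta^2/(i^2+j^2)\bigr)$; since $U$ is a square, $g$ need not be $C^2$ up to the corners, so the $O(\Delta^2)$ consistency estimate should be applied on interior subdomains with a barrier or harmonic-measure argument near $\partial U$ (or cite the classical convergence theorem for the five-point scheme with continuous Dirichlet data); and the identification $p_M\to G$ rather than $p_M\to \frac{\mu q^0}{kh}G$ holds only up to the normalization and sign left implicit in \eqref{bvp-Green}, a sloppiness inherited from the paper that you rightly flag but should make explicit when matching the two limits.
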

 We do not prove this Theorem in the article but will address this issue in upcoming publications.

Green function has an interesting property, which directly relate to Peacamn well-block radius.
\begin{theorem}\label{Peaceman-for-Green}
In the axisymmetric case, Green function has the property: 
There exists $R_0$ s.t. if
\begin{equation}\label{G-Delta-G-R0}
   4\cdot\left(G(x)|_{|x|=\Delta}-G(y)|_{|y|=R_0}\right)=1+O(\Delta)
\end{equation}
then 
\begin{equation}
  \ln\frac{\Delta}{R_0}=\frac{\pi}{2} +O(\Delta)
\end{equation}
\end{theorem}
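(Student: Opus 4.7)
The plan is to substitute the explicit splitting of $G$ provided in \eqref{green-function} into the left-hand side of \eqref{G-Delta-G-R0}, isolate the singular logarithmic piece, and show that the smooth remainder is a lower-order correction of size $O(\Delta)$. The desired algebraic consequence for $\ln(\Delta/R_0)$ is then immediate.

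First, writing $G(x) = \frac{1}{2\pi}\ln(1/r) + g(x,y)$ with $r = |x|$, a direct substitution yields
\begin{equation*}
G(x)\big|_{|x|=\Delta} - G(y)\big|_{|y|=R_0} = \frac{1}{2\pi}\ln\frac{R_0}{\Delta} + \bigl(g\big|_{|x|=\Delta} - g\big|_{|y|=R_0}\bigr).
\end{equation*}
In the axisymmetric setting the restrictions of $g$ to the circles $|x|=\Delta$ and $|y|=R_0$ are single constants, so the difference on the right is a well-defined scalar depending only on the two radii.

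Next, I would invoke the fact that the corrector $g$ is harmonic on all of $U$, since the Dirac singularity of $G$ is entirely absorbed into the fundamental solution $\frac{1}{2\pi}\ln(1/r)$. Hence $g$ is smooth in a neighbourhood of the origin and a first-order Taylor expansion gives
\begin{equation*}
g\big|_{|x|=\Delta} - g\big|_{|y|=R_0} = O(\Delta + R_0) = O(\Delta),
\end{equation*}
under the a priori comparison $R_0 \lesssim \Delta$, which is consistent with the conclusion we are trying to prove. Substituting back into the hypothesis \eqref{G-Delta-G-R0} reduces it to the transparent identity
\begin{equation*}
\frac{2}{\pi}\ln\frac{R_0}{\Delta} = 1 + O(\Delta),
\end{equation*}
and after taking logarithms and the appropriate sign bookkeeping for the convention $G=\frac{1}{2\pi}\ln(1/r)+g$ (so that $G$ grows toward the source), one arrives at $\ln(\Delta/R_0) = \pi/2 + O(\Delta)$, which is the Peaceman block-radius formula up to terms vanishing with the mesh.

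The main obstacle is not the algebra but the uniform control of the regular part $g$ near the origin: one needs an $O(\Delta)$ oscillation bound whose implicit constant does not degrade as $\Delta\to 0$ and is independent of the outer geometry of $U$. A Schauder-type estimate for the corrector, or alternatively an explicit Poisson-kernel representation along the lines of \cite{Evans}, is adequate to close this gap. One should also bootstrap the a priori bound $R_0 = O(\Delta)$ from a preliminary pass of the argument, so that the final asymptotic is self-consistent, and then verify that the error terms stemming from the hypothesis and from $g$ can be combined into a single $O(\Delta)$ without loss.
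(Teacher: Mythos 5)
Your argument coincides with the paper's own proof: both split $G$ into the fundamental solution $\frac{1}{2\pi}\ln(1/r)$ plus the regular corrector $g$, bound the oscillation $\bigl|g\big|_{|x|=\Delta}-g\big|_{|x|=R_0}\bigr|$ by $C\Delta$ (the paper via a Lipschitz-type a priori estimate for the Dirichlet corrector with $C$ depending on the distance from $0$ to $\partial U$, you via smoothness of the harmonic $g$ near the origin), and then read off $\ln\frac{\Delta}{R_0}=\frac{\pi}{2}+O(\Delta)$ from the hypothesis \eqref{G-Delta-G-R0}. The sign bookkeeping and the implicit assumption $R_0\lesssim\Delta$ that you flag are treated with the same level of informality in the paper, so there is no substantive difference between the two arguments.
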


\begin{proof}
First observe that
\begin{equation}
\left(G(x)|_{|x|=\Delta}-G(y)|_{|y|=R_0}\right)=\frac{1}{2\pi}\ln\frac{1}{\Delta}-\frac{1}{2\pi}\ln\frac{1}{R_0}+\left(g(x)\big|_{|x|=\Delta}-g(x)\big|_{|x|=R_0}\right).
\end{equation}
After regrouping from above follows
\begin{equation}
\left(G(x)|_{|x|=\Delta}-G(y)|_{|y|=R_0}\right)=\frac{1}{2\pi}\ln\frac{R_0}{\Delta}+\left(g(x)\big|_{|x|=\Delta}-g(x)\big|_{|x|=R_0}\right).
\end{equation}
Using standard appriory estimate for corrector $g(x)$ as a solution of the Dirichlet problem  one has
\begin{equation}
  \Big|g(x)\big|_{|x|=\Delta}-g(x)\big|_{|x|=R_0}\Big|\leq\max_{x,y\in {U(0,\Delta)}}\big| g(x)-g(y)\big|\leq C\big|x-y\big|\leq 2C\cdot\Delta. 
\end{equation}
Constants $C$ is constant depending on the distance between $0$ and $\partial U$.
\end{proof}

\subsection{Radial Darcy flow in two embedded annulus zones sewed by linear balance equation on the finite difference coarse grid, and its link to capacity   }\label{sweing-for}
In this section we will highlight sewing features behind the well block radius, and indicate its link to capacity from the analytical setup which comes out of the Peaceman method.

Consider auxiliary annular domain $U(0,\theta,\Delta)$, with $\theta<\Delta,$ to be selected by technical arguments. To use Peaceman well block radius as a sewing machinery, assume that flow is radial around the virtual well-disc of radius $\theta$ and is generated by given total flux $q$ on $\Gamma_\theta=\{x: |x|=\theta\}$. Consequently between $\Gamma_\theta$ and exterior boundary $\Gamma_e$ pressure drop will occur specified by given $p_w \text{ and} \ p_e,$ or one of those pressures and the total rate $q$. In the case of fully radial flow also consider the domain $U(0,R_w,R_e)$: $\Gamma_e=\{r=R_e\>>\Delta\}$ and $\Gamma_w=\{r=R_w<\Delta\}$, with actual well radius $R_w$.


Direct integration of the Darcy equation for radial flow \eqref{Darcy-Radial} \begin{equation}\label{Darcy-Radial}
  -\frac{\partial p}{\partial r}=\alpha v_r=\alpha\frac{q}{2\pi r}
\end{equation}
over the domain $U(0,r,R_e)$ provides 
 \begin{equation}\label{closed-form-pressure-drop-Darcy-q}
p(r)-p(R_e)=\frac{\alpha q}{2\pi}\ln\frac{r}{R_e}.
\end{equation}
for any $r>\theta, \ r= \sqrt{x^2+y^2}$ .


Now consider finite difference scheme with axial symmetry. Let
\begin{equation}\label{Omega-0}
\Omega_0=\{(x,y):-L\leq x\leq L \ ; -L \leq y \leq L \},  \Delta_x=\Delta_y=\frac{L}{M}=\Delta \ , \ M \in \mathcal{N}.
\end{equation}
We will impose constraint that for given radius $R_w$ of the "actual" well   $\Delta<\frac{L-R_w}{L}.$ Reasoning behind this constraint will become clear later in this section.

In the rectangular domain $\Omega_0$ consider finite difference BVP
 \begin{align}\label{a-1}
&        \frac{kh}{\mu} \cdot (p_{i+1, j} - 2p_{i,j} + p_{i-1, j}) 
          + \frac{kh}{\mu } \cdot (p_{i, j+1} - 2p_{i,j} + p_{i, j-1}) =  q_{i,j}=q \delta_{i,j} \\
&        p_{-M, j}=p_{M, j}=p_{i, -M}=p_{i, M}=0 \text{ - boundary conditions.} \\
&\text{In above:}\nonumber\\
&\{-M\leq i \leq M \ , \ -M\leq j \leq M\}=I\times J  \, \\
&\delta_{i,j}- \text{index function of the  block $B(0,0)$}:   \delta_{0,0}=1, \delta_{i,j} \text{ equals} \ 0 \ \text{otherwise}.
        \end{align}
        
Denote $2M\times2M$ matrix 
\begin{equation}\label{matrix-P-M}
P_M=\Big(   p_{i,j} \Big)_{\left((-M\leq i\leq M);(-M\leq j \leq M)\right)}
\end{equation}

Evidently matrix $P_M$ exists and is unique. 
Assume $\theta$ be fixed parameter, and 
\begin{equation}\label{L-M-r-w-cond}
\Delta=\frac{L}{M}>\theta.    
\end{equation}
Let $B(0,\theta)$ be the disc model of the well of radius $\theta$.
Following tradition in engineering, we will call the five-point difference equation \eqref{a-1}  for $i=0, j=0$ - the material balance equation.

Matrix $P_M$ is $2D$ approximation of the analytical problem
\begin{align}
  \Delta p=0 \ \text{in} \ \Omega_0 \setminus B (0,\theta); \nonumber \\
  h\frac{k}{\mu}\int_{\Gamma_\theta}\frac{\partial p}{\partial \nu}d s=q ; \label{anal-problem} \\
  p=0 \ \text{on} \ \partial{\Omega_0}.\nonumber
\end{align}
  Let  $\theta << L,$ then it is reasonable to assume that $ p(x,y)\big |_{|x|=\theta}\asymp constant.$    
  
\begin{assumption}\label{symmetry}
Assume symmetry condition in the four blocks surrounding the center block $U(0)=[-\frac{\Delta}{2};\frac{\Delta}{2}]\times[-\frac{\Delta}{2};\frac{\Delta}{2}]$(see Fig. \ref{Flow-Domain} ): 
\begin{equation}
 p_{-1,1}=p_{1,-1}=p_{-1,-1}=p_{1,1}=p_1.   
\end{equation}

\end{assumption}
\begin{definition}\label{material balance}
Denote $p_{0,0}=p_0$, then finite difference scheme in the five spot grid  containing disc(well) $B(0,\theta) $ in the central  block $B_{0,0}$ will take a form  of the symmetric material balance  in the form  :
\begin{equation}\label{p_1-p_0-1}
   p_1-p_0=\frac{\alpha}{4}q.
\end{equation}
\end{definition}



As it was stated, $p_{i,j}$ is uniquely defined by input parameters $M, q  , L , k , \mu, h.  $ 
Note that $p_1$ as well as $p_0$ depend on the characteristics of the grid size -  $\Delta$, but due to material balance equation their difference is $\Delta$-independent.  

Trace of the solution of the analytical problem \eqref{anal-problem}
on the $|x|=\Delta$ depends on $\Delta$. We need to find $R_0$ as a function of $\Delta$ such that the difference between traces of the analytical solution on $|x|=\Delta$ and $|x|=R_0$ is $\Delta$-independent as well. 
To do so let us introduce two auxiliary BVPs:
\begin{align}
  \Delta u_1=0 \ \text{in} \ U(\Delta,\theta)=B(0,\Delta)\setminus B(0,\theta) ; \nonumber \\
  h\frac{k}{\mu}\int_{\Gamma_\theta}\frac{\partial u}{\partial \nu}d s=q ; \label{B-Delta} \\
  u_1=p_1 \ \text{on} \ \partial{B(0,\Delta)}.\nonumber
\end{align}

\begin{align}
  \Delta u_0=0 \ \text{in} \ U(R_0,\theta)=B(0,R_0)\setminus B(0,\theta) ; \nonumber \\
  h\frac{k}{\mu}\int_{\Gamma_\theta}\frac{\partial u_0}{\partial \nu}d s=q ; \label{B-R-0} \\
  u_0=p_0 \ \text{on} \ \partial{B(0,R_0)}.\nonumber
\end{align}

\begin{definition}
Let imaginary well of the radius $\theta$ ($\theta<R_0<\Delta$) and total rate of the production $q$ are fixed. Denote boundary of the well $\Gamma_\theta=\{|x|=\theta\}.$  
We will say that Peaceman problem in the annular domains $U(R_0,\theta)$ and $U(\Delta,\theta)$ is strictly  well posed if $R_0$ exists s.t. simultaneously two constrains hold: 
\begin{enumerate}
    \item \label{u1=u0}
   solutions of the problems  \eqref{B-Delta} and \eqref{B-R-0} on $\Gamma_\theta$ are constant and 
$$
    u_1\big|_{r=\theta}=u_0\big|_{r=\theta}=p_{\theta},
$$

\item \label{p1-p0-balance}
$p_1$ and $p_0$ are solutions of the balance equation \eqref{p_1-p_0-1}.  
\end{enumerate}

\end{definition}
Another interpretation of the Definition of well-posedness contains sewing property of the material balance equation, which allows to sew the analytical solutions of the problems \ref{B-Delta} and \ref{B-R-0} via values on the well boundary.  
\begin{theorem}\label{Peaceman-basic}
Assume balance equation is as given by \eqref{p_1-p_0-1}. Let imaginary well of the radius $\theta$ to be at the center of a five spot grid. Let number of the blocks $M$, size of the square domain $L$, and imaginary well of radius $\theta$ be such that
\begin{equation}\label{M-delta-theta}
 \Delta=\frac{L}{M}>e^{\frac{\pi}{2}}\cdot \theta.  
\end{equation}
Then for given rate $q$, 
if 
\begin{equation}\label{R-0-delta}
 R_0=\Delta\cdot e^{-\frac{\pi}{2}},   
\end{equation}
then Peaceman problem is strictly well posed.
\end{theorem}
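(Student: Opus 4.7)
The plan is to solve the two auxiliary BVPs \eqref{B-Delta} and \eqref{B-R-0} in closed form using the radial Darcy-Dupuit integration already recorded in \eqref{closed-form-pressure-drop-Darcy-q}, evaluate their traces on $\Gamma_\theta$, and then match them against the balance equation \eqref{p_1-p_0-1} to solve for $R_0$. The radial symmetry of both BVPs is what makes the argument essentially algebraic.

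First I would observe that since the outer Dirichlet data is constant ($p_1$ on $\partial B(0,\Delta)$ and $p_0$ on $\partial B(0,R_0)$) and the total flux through $\Gamma_\theta$ is prescribed, each problem admits a unique radially symmetric harmonic function in its annulus by standard uniqueness for the Laplace equation. Integrating \eqref{Darcy-Radial} from $\theta$ to $\Delta$ and from $\theta$ to $R_0$, analogously to \eqref{closed-form-pressure-drop-Darcy-q}, gives
\begin{equation*}
u_1(r) = p_1 + \frac{\alpha q}{2\pi}\ln\frac{r}{\Delta},\qquad u_0(r) = p_0 + \frac{\alpha q}{2\pi}\ln\frac{r}{R_0}.
\end{equation*}
In particular each solution is constant on $\Gamma_\theta$, so the first half of condition \ref{u1=u0} in the definition of well-posedness is automatic, and both traces equal $p_\theta$ precisely when
\begin{equation*}
p_1 + \frac{\alpha q}{2\pi}\ln\frac{\theta}{\Delta} = p_0 + \frac{\alpha q}{2\pi}\ln\frac{\theta}{R_0},
\end{equation*}
which rearranges to $p_1 - p_0 = \dfrac{\alpha q}{2\pi}\ln\dfrac{\Delta}{R_0}$.

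Next I would couple this with the balance equation \eqref{p_1-p_0-1}. Equating $\dfrac{\alpha q}{4}$ with $\dfrac{\alpha q}{2\pi}\ln\dfrac{\Delta}{R_0}$ and cancelling $\alpha q$ (valid because $q$ is the nonzero prescribed rate) isolates $\ln(\Delta/R_0)=\pi/2$, i.e.\ $R_0 = \Delta\,e^{-\pi/2}$, as asserted in \eqref{R-0-delta}. Conversely, if one defines $R_0$ by \eqref{R-0-delta}, then reversing the same algebra shows that the common trace $p_\theta$ is well-defined and that \eqref{p_1-p_0-1} holds with the same $p_1,p_0$, yielding both items of well-posedness.

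Finally I would use the hypothesis \eqref{M-delta-theta} to confirm that the geometry is admissible: $\Delta > e^{\pi/2}\theta$ forces $R_0 = \Delta\,e^{-\pi/2} > \theta$, so the annulus $U(R_0,\theta)$ is nondegenerate and both BVPs are posed on genuine domains with $\theta < R_0 < \Delta$. The main (and really only) subtle point is this geometric admissibility check — everything else is forced by radial symmetry and one-dimensional integration; there is no PDE machinery beyond existence and uniqueness of the radial harmonic in an annulus with mixed Dirichlet/flux data, which is classical.
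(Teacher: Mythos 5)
Your proposal is correct and follows essentially the same route as the paper: integrate the radial Darcy law \eqref{Darcy-Radial} over each annulus to express $u_1,u_0$ (equivalently the pressure drops \eqref{p-delta}--\eqref{p-R-0}), impose equality of the traces on $\Gamma_\theta$ together with the balance equation \eqref{p_1-p_0-1}, and cancel $\alpha q$ to obtain $\ln\frac{\Delta}{R_0}=\frac{\pi}{2}$. Your explicit remark that \eqref{M-delta-theta} guarantees $\theta<R_0<\Delta$, so the annuli are nondegenerate, is a small but welcome addition left implicit in the paper.
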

\begin{proof}
Due to radial flow one has \eqref{Darcy-Radial}, and consequently \eqref{closed-form-pressure-drop-Darcy-q}, from which we get the system
\begin{equation}\label{p-delta}
    u_1(\Delta)-u_1(\theta)=p_1-u_1(\theta)=\frac{\alpha q}{2\pi}\ln\frac{\Delta}{\theta}.
\end{equation}
\begin{equation}\label{p-R-0}
    u_0(R_0)-u_0(\theta)=p_0-u_0(\theta)=\frac{\alpha q}{2\pi}\ln\frac{R_0}{\theta}
\end{equation}
Due to balance equation, in order   $R_0$ to exist it is sufficient that   
\begin{equation}
 \frac{\alpha}{4}q-\frac{\alpha q}{2\pi}\left(\ln\frac{\Delta}{\theta}-\ln\frac{R_0}{\theta}\right)=u_1(\theta)-u_0(\theta)  .
\end{equation}
or
\begin{equation}
 \frac{\alpha}{2}q\left(\frac{1}{2}-\frac{1}{\pi}\ln\frac{\Delta}{R_0}\right)=u_1(\theta)-u_0(\theta) . 
\end{equation}
In order the Peaceman problem to be strictly well-posed, it suffices expression in the parentheses to be equal zero:
\begin{equation}
 \left(\frac{1}{2}-\frac{1}{\pi}\ln\frac{\Delta}{R_0}\right)=0 \ \Longleftrightarrow \frac{\pi}{2}=\ln\frac{\Delta}{R_0}.  
\end{equation}
QED
\end{proof}

\begin{remark}

Due to material balance equation there is no difference between $p_1$ and $p_0$ for $\Delta>e^{\frac{\pi}{2}}\cdot r_w$.
But from point of view of the engineering  it is important  to obtain value of the simulated well pressure from pressure values of the simulated numerical solution, more precisely, from value of $\bar{p}_0$  at nearest block containing well  as a function of given $\Delta$.
Algorithm will then consist of two possible scenarios: 
\begin{enumerate}
    \item $\Delta>e^{\frac{\pi}{2}}r_w$. We will find value of the pressure on the well by
    \begin{equation}\label{recont-p-w-1}
     p_w=p_0-\frac{\alpha q}{2\pi}\ln\frac{e^{-\frac{\pi}{2}}\cdot\Delta}{r_w}   
    \end{equation}
    \item $ r_w\leq \Delta\leq e^{\frac{\pi}{2}}r_w$. Then we will let $p_w=p_1-\frac{\alpha q}{2\pi}\ln\frac{\Delta}{r_w}$, or due to balance equation \eqref{p_1-p_0-1} :
    \begin{equation}\label{recont-p-w-2}
     p_w=p_0+q\frac{\alpha}{4} -\frac{\alpha q}{2\pi}\ln\frac{\Delta}{r_w}.
    \end{equation}
    \end{enumerate} 
\end{remark}

\begin{remark}
It is also worth to mention that if $\Delta \to 0$ then $\lim_{\Delta\to 0}p_0(\Delta)$ exist and uniquely determined by $q$. It follows from the following arguments for simple annular domain.
Namely let the radius of imaginary well $\theta=r_w$  .Then   from Theorem \ref{Peaceman-basic} it follows that 
\begin{equation}\label{p-w-Delat}
  p_w(\Delta)=p_0(\Delta)-\frac{\alpha q}{2\pi}\ln\frac{R_0(\Delta)}{r_w} .
\end{equation}

\end{remark}


\subsection{Two Terms Radial Case}\label{Two Terms Analitical solution}
Now consider Peaceman problem for nonlinear flow by Forchheimer two-terms law:
 \begin{equation}\label{Radial-Two-Term}
 -\frac{\partial p}{\partial r}=\alpha_1 v_r+\beta_1 v_r|v_r|, \text{ } \alpha_1 = \frac{\mu}{k}=\alpha h
 \end{equation}  
In  \eqref{Radial-Two-Term} if $\beta_1=0$ one can get classical Darcy equation. 
 From \eqref{Radial-Two-Term}
 \begin{equation}\label{Radial-grad-pressure-vs-v-r}
  \frac{\partial p}{\partial r}=\alpha \frac{q}{2\pi r}+\beta \frac{q}{4\pi^2 r^2}, \text{ } \beta = \frac{\beta_1}{h^2}    
 \end{equation}
 Consider flow from  $\partial B(0,R_2)$ to $\partial B(0,R_1)$ in the annular domain $U$(see Fig. \ref{fig:Anul-Domain}): 
 \begin{align}\label{BVP-radial-anal}
 \begin{cases}
 & U=B(0,R_2)\setminus B(0,R_1), \ \text{with fixed total rate } \  q=\int_{S} v(r) ds, ,\\
& \text{Given pressure on one of the boundaries} \ \partial B(0,R_i): \\
& p(r)\big|_{r=R_i}=f_i \ \text{for} \ i=1 \ \text{or}  \ 2 .   
\end{cases}
   \end{align}
Explicit formula for generic solution of \eqref{BVP-radial-anal} for two terms Forchheimer law follows from basic integration:
 \begin{equation}
     p(r)=\frac{\alpha q}{2\pi}\ln r-\beta \frac{q}{4\pi^2}\frac{1}{r}+constant
 \end{equation}
 
Then using boundary conditions in \eqref{BVP-radial-anal} one can get a generic formula for pressure drop between two contours ($\partial B(0,R_i)$) of the boundary of annular domain $U(0,R_1,R_2)=B(0,R_2)\setminus B(0,R_1)$.
 \begin{equation}\label{closed-form-pressure-drop-two-terms-Forch}
p\big|_{r=R_2}-p\big|_{r=R_1}=f_2-f_1=\frac{\alpha q}{2\pi}\ln\frac{R_2}{R_1}+\beta \frac{q}{4\pi^2}\left(\frac{1}{R_1}-
\frac{1}{R_2}\right) 
\end{equation}
\begin{figure}
    \centering
    \includegraphics[scale=0.5]{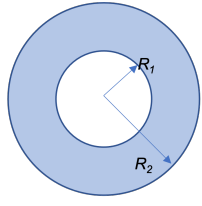}
    \caption{General Annual Domain $U$ }
   
    \label{fig:Anul-Domain}
\end{figure}

\subsection{Criteria for radial non-linear Forchheimer flow in two annular zones with linear balance equation on the finite difference coarse grid  }\label{Two-Term}
In this section we will derive Peaceman-type well pressure formula taking into account non-linearity of the flow near well using Forchheimer correction of Darcy law by two term equation \eqref{Radial-Two-Term}. Although we could use same approach as for the Darcy case, our treatment for non-linear Forchheimer will be different. Namely we will not introduce imaginary radius $\theta$ and corresponding Peaceman well-posedness definition. We do it purposely to highlight main assumption and keep $\theta=R_w$.

Since two-term non-linearity of flow is essential only in near-well high velocity zones, we will adopt conventional engineering hypothesis that on numerical grid material balance can be still considered linear, while near-well correction is due to Forchheimer type of non-linearity. From \eqref{p_1-p_0-peac} and \eqref{closed-form-pressure-drop-two-terms-Forch} with $\beta\neq 0$ the system of 3 equations follows:
\begin{equation}\label{p_1-p_0-D}
   p_1-p_0=\frac{\alpha}{4}q
\end{equation}
\begin{equation}\label{p_1-p_w-F}
   p_1-p_w=\frac{\alpha}{2\pi}q\ln\frac{\Delta}{R_w}+\beta \frac{q^2}{4\pi^2}\left(\frac{1}{R_w}-
\frac{1}{\Delta}\right)   
\end{equation}
\begin{equation}\label{p_0-p_w-F}
   p_0-p_w=\frac{\alpha}{2\pi}q\ln\frac{R_0}{R_w}+\beta \frac{q^2}{4\pi^2}\left(\frac{1}{R_w}-
\frac{1}{R_0}\right)  
\end{equation}
\begin{theorem}\label{glue-radial-Forch}
Assume that $q$ and $p_w$ solve quadratic equation
\begin{equation}\label{q-in-U(Delta,rw)-Forch}
   p_1-p_w=\frac{\alpha}{2\pi}q\ln\frac{\Delta}{R_w}+\beta \frac{q^2}{4\pi^2}\left(\frac{1}{R_w}-
\frac{1}{\Delta}\right)   
\end{equation}
Then if $R_0$ satisfies equation
 \begin{equation}\label{R_0-for-Forch}
    R_0=\Delta\cdot e^{-\delta\frac{\pi}{2}} 
 \end{equation}
 system \eqref{p_1-p_0-D}-\eqref{p_0-p_w-F} has a solution for any mutually related $p_1, \ p_0, \text{and} \ p_w$, 
 if $\delta$ satisfies equation
 \begin{equation}\label{delta-factor-Forch}
  \delta+\beta\frac{q}{\alpha\pi^2}\left(\frac{e^{\delta\frac{\pi}{2}}}{\Delta }-\frac{1}{\Delta} \right) =1  
 \end{equation}
 
\end{theorem}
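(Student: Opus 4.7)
The plan is essentially algebraic: the three equations \eqref{p_1-p_0-D}, \eqref{p_1-p_w-F}, \eqref{p_0-p_w-F} in the unknowns $p_1, p_0, p_w$ are overdetermined in the sense that once $q$ and $R_w, \Delta$ are fixed, the quantity $p_1 - p_0$ is predicted in two different ways, once by the linear material balance and once by the difference of the two Forchheimer pressure drops. Consistency of the system is therefore equivalent to matching these two predictions, and this single constraint is what we will solve for $R_0$.

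First I would subtract \eqref{p_0-p_w-F} from \eqref{p_1-p_w-F}. The left-hand side telescopes to $p_1 - p_0$, so by \eqref{p_1-p_0-D} it equals $\frac{\alpha}{4}q$, while on the right-hand side the $1/R_w$ terms cancel. This gives the single consistency equation
\begin{equation*}
\frac{\alpha q}{2\pi}\ln\frac{\Delta}{R_0} + \frac{\beta q^2}{4\pi^2}\left(\frac{1}{R_0}-\frac{1}{\Delta}\right) = \frac{\alpha}{4}q.
\end{equation*}
Dividing through by $q$ (we are in the nontrivial case $q\neq 0$) and by $\alpha/2$, this reduces to
\begin{equation*}
\frac{1}{\pi}\ln\frac{\Delta}{R_0} + \frac{\beta q}{2\alpha\pi^2}\left(\frac{1}{R_0}-\frac{1}{\Delta}\right) = \frac{1}{2}.
\end{equation*}

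Next I substitute the ansatz \eqref{R_0-for-Forch}, namely $R_0 = \Delta\, e^{-\delta\pi/2}$, which gives $\ln(\Delta/R_0) = \delta\pi/2$ and $1/R_0 = e^{\delta\pi/2}/\Delta$. Plugging these in and multiplying by $2$ turns the consistency condition into exactly \eqref{delta-factor-Forch}:
\begin{equation*}
\delta + \frac{\beta q}{\alpha\pi^2}\left(\frac{e^{\delta\pi/2}}{\Delta} - \frac{1}{\Delta}\right) = 1.
\end{equation*}
So the ansatz \eqref{R_0-for-Forch} makes the three-equation system consistent precisely when $\delta$ solves \eqref{delta-factor-Forch}, which is the claim. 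Note that when $\beta = 0$ this recovers $\delta = 1$ and the classical Peaceman value $R_0 = \Delta e^{-\pi/2}$ of Theorem \ref{Peaceman-basic}, as a useful sanity check.

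The step I expect to be the most delicate is not the algebra itself but justifying that the theorem is nonvacuous, i.e.\ that \eqref{delta-factor-Forch} admits a solution $\delta \in (0,1)$ for the relevant range of parameters. Writing the left-hand side as $F(\delta) := \delta + \frac{\beta q}{\alpha\pi^2\Delta}(e^{\delta\pi/2}-1)$, one checks $F(0)=0$ and $F$ is strictly increasing and continuous in $\delta$, while $F(1) = 1 + \frac{\beta q}{\alpha\pi^2\Delta}(e^{\pi/2}-1) \geq 1$, so by the intermediate value theorem a unique $\delta \in (0,1]$ with $F(\delta)=1$ exists. This existence/uniqueness remark is not strictly required for the statement as written (the theorem is phrased conditionally on $\delta$ satisfying \eqref{delta-factor-Forch}), but is worth appending as a remark so that the formula is known to be meaningful in practice.
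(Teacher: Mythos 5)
Your proposal is correct and follows essentially the same route as the paper's proof: both reduce the system to the single consistency condition obtained by equating the difference of the two Forchheimer pressure-drop relations with the linear balance $p_1-p_0=\frac{\alpha}{4}q$, and then substitute the ansatz $R_0=\Delta e^{-\delta\pi/2}$ to arrive at \eqref{delta-factor-Forch}. Your closing remark on existence and uniqueness of $\delta\in(0,1]$ via monotonicity of $F(\delta)=\delta+\frac{\beta q}{\alpha\pi^2\Delta}\left(e^{\delta\pi/2}-1\right)$ is a sound elaboration of what the paper only asserts in its subsequent remark.
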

\begin{proof}
Indeed, let first  substitute \eqref{R_0-for-Forch} into RHS of the \eqref{p_0-p_w-F} to get

\begin{align}
&   p_0-p_w=\left(\frac{\alpha}{2\pi}q\ln\frac{\Delta}{R_w}+\beta \frac{q^2}{4\pi^2}\left(\frac{1}{R_w}-
\frac{1}{\Delta}\right)\right) \label{p_1}\\ &+\left(\frac{\alpha}{2\pi}q(-\delta\frac{\pi}{2})+\beta\frac{q^2}{4\pi^2}\left(-\frac{1}{\Delta\cdot e^{-\delta\frac{\pi}{2}}}+\frac{1}{\Delta}
\right)\right)  \label{q}
\end{align}
   First term in parentheses in RHS of \eqref{p_1} is equal to  $p_1-p_w$, and the second one we will rewrite as
   \begin{equation}\label{p_0-p_w-p_1-p_w-F}
      p_0-p_w=p_1-p_w +q\left(\frac{\alpha}{4}(-\delta)+\beta\frac{q}{4\pi^2}\left(-\frac{1}{\Delta\cdot e^{-\delta\frac{\pi}{2}}}+\frac{1}{\Delta}
\right)\right)   
   \end{equation}
or
\begin{equation}\label{p_0-p_1-F}
      p_0-p_1= -q\frac{\alpha}{4}\left(\delta+\beta\frac{q}{\alpha\pi^2}\left(\frac{1}{\Delta\cdot e^{-\delta\frac{\pi}{2}}}-\frac{1}{\Delta}
\right)\right).  
   \end{equation}
   Finally we have 
 \begin{equation}\label{p_1-p_0-q-F}
      p_1-p_0= q\frac{\alpha}{4}\left(\delta+\beta\frac{q}{\alpha\pi^2}\left(\frac{1}{\Delta\cdot e^{-\delta\frac{\pi}{2}}}-\frac{1}{\Delta}
\right)\right).  
   \end{equation} 
   To get \eqref{p_1-p_0-D} and consequently to finish the proof it will be sufficient to assume that $\delta$ satisfies equation\eqref{delta-factor-Forch}.
\end{proof}

\begin{remark}  

\begin{enumerate}

\item As $\beta\to 0$ factor $\delta\to 1$, and we consequently get Peaceman well-block radius as in the Theorem  \ref{Peaceman-basic}.
\item Evidently non-linear functional equation \eqref{delta-factor-Forch} has a solution for any $q>0.$

\end{enumerate}

\end{remark}


\section{Impact of the Correction in the Dake Formula}
\subsection{Dake Formula and D-factor}\label{Dake}

To compute fluid inflow to well in the case of Forchheimer non-linear flow, reservoir simulators use the so-called D-factor, or dynamic skin-factor. It is assumed that Peaceman radius is still applicable. 

However, a nonlinear (rate-dependent) correction is introduced in the denominator of the inflow formula. The derivation can be found in \cite{Dake-book} and is briefly given below. As before, we neglect compressibility of the fluid for simplicity. However, the derivation can be directly extended to compressible case by introduction of pseudo-pressure \cite{Dake-book}.

Assume flow is radial and $R_e$ is radius of a contour with specified pressure $p_e$. Equation \eqref{Radial-Two-Term} is integrated from $R_w$ to $R_e$ to relate pressure drop $p_e-p_w$ to flow rate $q=2 \pi r h v_r$
\begin{equation}\label{Dake_two_term}
   p_e-p_w=\frac{\alpha}{2\pi}q\ln\frac{R_e}{R_w}+\beta \frac{q^2}{4\pi^2}\left(\frac{1}{R_w}-\frac{1}{R_e}\right)   
\end{equation}
Now if $R_e$ is large enough to neglect $\frac{1}{R_e}$, the dimensionless pressure drop becomes
\begin{equation}\label{Dake_dimensionless}
   \frac{{2\pi}(p_e-p_w)}{\alpha q}=\ln\frac{R_e}{R_w}+\frac{\beta}{\alpha} \frac{q}{2\pi}\frac{1}{R_w} 
\end{equation}

Here the first term in RHS is equivalent to the Darcy case, and the second term is a function (linear) of $q$, with 
\begin{equation}\label{D-factor}
D=\frac{\beta}{\alpha}\frac{1}{2{\pi}R_w}
\end{equation}
being the D-factor.

In reservoir simulators this result is used for Forchheimer flow as follows. As in the linear case, inflow to the well is assumed radial within the grid block containing the well. Formula \eqref{Dake_dimensionless} with $R_e=R_0$ and $p_e=p_0$ is used to relate well flow rate $q$ to the difference of grid block pressure $p_0$ and well pressure $p_w$. The key problem is that $R_0$ is computed by \eqref{R-0-delta}, just as in the linear case:
\begin{equation}\label{Dake_simulators}
   p_0-p_w=\frac{\alpha}{2\pi}q\left(\ln\frac{R_{0_{Darcy}}}{R_w}+Dq\right)   
\end{equation}
In fact, the use of $R_{0_{Darcy}}$ in \eqref{Dake_simulators} is incorrect, since the Peaceman formula was derived for Darcy flow only. The correct formula for Forchheimer flow can be obtained by combining \eqref{p_0-p_w-F}, \eqref{R_0-for-Forch} and \eqref{delta-factor-Forch}. \newline Recall \eqref{p_1}-\eqref{q} and substitute \eqref{delta-factor-Forch} multiplied by $-\frac{\pi}{2}$:

\begin{equation}\label{Dake_correct_derivation1}
  p_0-p_w=\frac{\alpha}{2\pi}q\left(\ln\frac{\Delta}{R_w}-\frac{\pi}{2}-\beta\frac{q}{2\alpha\pi}\left(\frac{1}{\Delta\cdot e^{-\delta\frac{\pi}{2}}}-\frac{1}{\Delta}\right)\right)
  +\beta\frac{q^2}{4\pi^2}\left(\frac{1}{R_w}-\frac{1}{\Delta\cdot e^{-\delta\frac{\pi}{2}}}\right),
\end{equation}

or rearranging:

\begin{equation}\label{Dake_correct_derivation2}
  p_0-p_w=\frac{\alpha}{2\pi}q\ln\frac{\Delta\cdot e^{-\frac{\pi}{2}}}{R_w}+\beta\frac{q^2}{4\pi^2}\left(\frac{1}{R_w}-\frac{1}{\Delta\cdot e^{-\delta\frac{\pi}{2}}}-\frac{1}{\Delta}+\frac{1}{\Delta\cdot e^{-\delta\frac{\pi}{2}}}\right),
\end{equation}

which finally gives:

\begin{equation}\label{Dake_correct}
    p_0-p_w=\frac{\alpha}{2\pi}q\ln\frac{R_{0_{Darcy}}}{R_w}+\beta \frac{q^2}{4\pi^2}\left(\frac{1}{R_w}-\frac{1}{\Delta}\right).
\end{equation}

This result is very similar to \eqref{Dake_simulators} except for the last term with $\frac{1}{\Delta}$. Thus it turns out that for coarse grids with large enough $\Delta$ the correct formula \eqref{Dake_correct} is almost equivalent to the one used by reservoir simulators. But as $\Delta$ goes smaller, the correction gets significant, and the D-factor becomes grid-dependent (dependent on $\Delta$). And note that even for quite large $\Delta$ the correction can become significant with increasing $q$.

\section{Conclusive Remarks and Discussion}\label{conclusion}
We revisited Peaceman well-block radius formula and proved its analog for Green function in the annular domain. Main algorithm is based on 'sewing' of material balance equation \eqref{Mat-Bal-0} with the analytical equation of flow. For the linear case, we introduced the concept of Peaceman well-posedness and rigorously obtained Peaceman classical well-block radius for Darcy flow near well and linear material balance equation on coarse numerical grid. We also considered the following locally nonlinear problem: we kept material balance equation to be linear but allowed non-linearity in the flow equation \eqref{dup-0}. In this case, for linear balance equation on the coarse grid and Forchheimer two terms law near the well, we proved that well-block radius  $R_0$ should depend on $q$.  We implemented this algorithm in Section \ref{Two-Term} and Section \ref{Dake}, and obtained formulae for $R_0$ which is very close to traditional one used in flow simulators for big $\Delta$, but quite different for small $\Delta$ or large $q$. This approach is planned to be extended  for different classes of non-Darcy flows. Wide class of Non-Darcy flows was presented in the articles \cite{ibragim-prod-ind-gas, ibragim-prod-ind-PSS}.

\bibliographystyle{plain}


\end{document}